\newtheorem{theorem}{Theorem}[section]
\newtheorem{lemma}[theorem]{Lemma}
\newtheorem{proposition}[theorem]{Proposition}
\newtheorem{corollary}[theorem]{Corollary}
\theoremstyle{definition}
\numberwithin{equation}{section}
\newcommand{\vp}{\varphi}
\newcommand{\clb}{\mathcal{B}}
\newcommand{\clh}{\mathcal{H}}
\newcommand{\cll}{\mathcal{L}}
\newcommand{\clr}{\mathcal{R}}
\newcommand{\cls}{\mathcal{S}}
\newcommand{\clz}{\mathcal{Z}}
\newcommand{\D}{\mathbb{D}}
\newcommand{\T}{\mathbb{T}}
\newcommand{\Z}{\mathbb{Z}}
\newcommand{\raro}{\rightarrow}
\begin{document}


\setcounter{page}{1}


\title[Partially isometric Toeplitz operators]{Partially isometric Toeplitz operators on the polydisc}

\author[Deepak]{Deepak K. D.}
\address{Indian Statistical Institute, Statistics and Mathematics Unit, 8th Mile, Mysore Road, Bangalore, 560059,
India}
\email{dpk.dkd@gmail.com }

\author[Pradhan]{Deepak Pradhan}
\address{Indian Statistical Institute, Statistics and Mathematics Unit, 8th Mile, Mysore Road, Bangalore, 560059,
India}
\email{deepak12pradhan@gmail.com}

\author[Sarkar]{Jaydeb Sarkar}
\address{Indian Statistical Institute, Statistics and Mathematics Unit, 8th Mile, Mysore Road, Bangalore, 560059,
India}
\email{jay@isibang.ac.in, jaydeb@gmail.com}

\subjclass[2010]{47B35, 47B20, 30J05, 30H10, 15B05, 46L99}

\keywords{Toeplitz and Laurent operators, Hardy space over polydisc, partial isometries, hyponormal operators, inner functions, power partial isometries}

\begin{abstract}
A Toeplitz operator $T_\varphi$, $\varphi \in L^\infty(\mathbb{T}^n)$, is a partial isometry if and only if there exist inner functions $\varphi_1, \varphi_2 \in H^\infty(\mathbb{D}^n)$ such that $\varphi_1$ and $\varphi_2$ depends on different variables and $\varphi = \bar{\varphi}_1 \varphi_2$. In particular, for $n=1$, along with new proof, this recovers a classical theorem of Brown and Douglas.

We also prove that a partially isometric Toeplitz operator is hyponormal if and only if the corresponding symbol is an inner function in $H^\infty(\mathbb{D}^n)$. Moreover, partially isometric Toeplitz operators are always power partial isometry (following Halmos and Wallen), and hence, up to unitary equivalence, a partially isometric Toeplitz operator with symbol in $L^\infty(\mathbb{T}^n)$, $n > 1$, is either a shift, or a co-shift, or a direct sum of truncated shifts. Along the way, we prove that $T_\varphi$ is a shift whenever $\varphi$ is inner in $H^\infty(\mathbb{D}^n)$.
\end{abstract}

\maketitle

\section{Introduction}

Toeplitz operators are one of the most useful and prevalent objects in matrix theory, operator theory, operator algebras, and its related fields. For instance, Toeplitz operators provide some of the most important links between index theory, $C^*$-algebras, function theory, and non-commutative geometry. See the monograph by Higson and Roe \cite{HR} for a thorough presentation of these connections, and consult the paper by Axler \cite{Axler} for a rapid introduction to Toeplitz operators.

Evidently, a lot of work has been done in the development of one variable Toeplitz operators, and it is still a subject of very active research, with an ever-increasing list of connections and applications. But on the other hand, many questions remain to be settled in the several variables case, and more specifically in the open unit polydisc case (however, see \cite{D, DSZ, Gu, MSS, SZ}). The difficulty lies in the obvious fact that the standard (and classical) single variable tools are either unavailable or not well developed in the setting of polydisc. Evidently, advances in Toeplitz operators on the polydisc have frequently resulted in a number of new tools and techniques in operator theory, operator algebras, and related fields.

Our objective of this paper is to address the following basic question: Characterize partially isometric Toeplitz operators on $H^2(\D^n)$, where $H^2(\D^n)$ denotes the Hardy space over the unit polydisc $\D^n$. Recall that a partial isometry \cite{HM} is a bounded linear operator whose restriction to the orthogonal complement of its null space is an
isometry.

Before we answer the above question, we first recall that $H^2(\D^n)$ is the Hilbert space of all analytic functions $f$ on $\D^n$ such that
\[
\|f\| := \Big(\sup_{0 \leq r < 1} \int_{\mathbb{T}^n} |f(rz_1, \ldots, r z_n)|^2 d\bm{m}({z}) \Big)^{\frac{1}{2}} < \infty,
\]
where $d\bm{m}(z)$ is the normalized Lebesgue measure on the $n$-torus $\mathbb{T}^n$, and $z = (z_1, \ldots, z_n)$. We denote by $L^2(\T^n)$ the Hilbert space $L^2(\T^n, d\bm{m}({z}))$. From the radial limits of square summable analytic functions point of view \cite{WR}, one can identify $H^2(\D^n)$ with a closed subspace $H^2(\T^n)$ of $L^2(\T^n)$. Let $L^\infty(\T^n)$ denote the standard $C^*$-algebra of $\mathbb{C}$-valued essentially bounded Lebesgue measurable functions on $\T^n$. The \textit{Toeplitz operator} $T_\vp$ with symbol $\vp \in L^\infty(\T^n)$ is defined by
\[
T_\vp f = P_{H^2(\D^n)} (\vp f) \qquad \quad (f \in H^2(\D^n)),
\]
where $P_{H^2(\D^n)}$ denotes the orthogonal projection from $L^2(\T^n)$ onto $H^2(\D^n)$. Also recall that
\[
H^\infty(\D^n) = L^\infty(\T^n) \cap H^2(\D^n),
\]
where $H^\infty(\D^n)$ denotes the Banach algebra of all bounded analytic functions on $\D^n$. A function $\vp \in H^\infty(\D^n)$ is called \textit{inner} if $\vp$ is unimodular on $\T^n$.

The answer to the above question is contained in the following theorem:
\begin{theorem}\label{thm- main}
Let $\vp$ be a nonzero function in $L^\infty(\T^n)$. Then $T_\vp$ is a partial isometry if and only if there exist inner functions $\vp_1, \vp_2 \in H^\infty(\D^n)$ such that $\vp_1$ and $\vp_2$ depends on different variables and
\[
T_\vp = T_{\vp_1}^* T_{\vp_2}.
\]
\end{theorem}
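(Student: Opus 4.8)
The plan is to prove the two implications separately, and I expect the real work to be in the forward direction.

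\emph{Sufficiency} ($\Leftarrow$). Here $\vp=\bar\vp_1\vp_2$, and since $\vp_2\in H^\infty(\D^n)$ the Toeplitz product rule gives $T_\vp=T_{\bar\vp_1}T_{\vp_2}=T_{\vp_1}^*T_{\vp_2}$, so it suffices to check this operator is a partial isometry. Writing the coordinates of $\vp_1$ as $\{z_i:i\in\al\}$ and those of $\vp_2$ as $\{z_i:i\in\beta\}$ with $\al\cap\beta=\emptyset$, I would factor $H^2(\D^n)\cong H^2(\D^{|\al|})\otimes H^2(\D^{|\beta|})\otimes H^2(\D^{\,n-|\al|-|\beta|})$; under this identification $T_{\vp_1}=M_{\vp_1}\otimes I\otimes I$ and $T_{\vp_2}=I\otimes M_{\vp_2}\otimes I$, where $M_{\vp_k}$ is an isometric multiplication operator because $\vp_k$ is inner. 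Then $T_{\vp_1}^* T_{\vp_2}=M_{\vp_1}^*\otimes M_{\vp_2}\otimes I$ and $(T_{\vp_1}^*T_{\vp_2})^*(T_{\vp_1}^* T_{\vp_2})=(M_{\vp_1}M_{\vp_1}^*)\otimes(M_{\vp_2}^*M_{\vp_2})\otimes I=(M_{\vp_1}M_{\vp_1}^*)\otimes I\otimes I$, an orthogonal projection; hence $T_\vp$ is a partial isometry.

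\emph{Necessity} ($\Rightarrow$). Assume $T_\vp$ is a partial isometry; note $T_\vp\neq 0$ since $\vp\neq 0$. First, $\|\vp\|_\infty=\|T_\vp\|=1$, so $|\vp|\le 1$ a.e. Next I use the orthogonal decomposition $\vp f=T_\vp f+H_\vp f$, where $H_\vp f:=(I-P)(\vp f)$ and $P:=P_{H^2(\D^n)}$, giving
\[
\|\vp f\|^2=\|T_\vp f\|^2+\|H_\vp f\|^2\qquad(f\in H^2(\D^n)).
\]
On the initial space $\clm:=(\ker T_\vp)^\perp$ the operator $T_\vp$ is isometric, so for $f\in\clm$ one gets $\|H_\vp f\|^2=\|\vp f\|^2-\|f\|^2=\int_{\T^n}(|\vp|^2-1)|f|^2\,d\bm{m}\le 0$, forcing $|\vp|=1$ a.e. on $\{f\ne 0\}$ and $H_\vp f=0$. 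Since $\clm\ne\{0\}$ and a nonzero $H^2(\D^n)$ function vanishes only on a null subset of $\T^n$, I conclude $|\vp|=1$ a.e.; thus $\vp$ is unimodular and $T_\vp^*T_\vp=I-H_\vp^*H_\vp$. Consequently $\clm=\ker H_\vp=\{f\in H^2(\D^n):\vp f\in H^2(\D^n)\}$ and, symmetrically (applying the above to $T_\vp^*=T_{\bar\vp}$), $\clf:=\operatorname{ran}T_\vp=\vp\clm=\{g\in H^2(\D^n):\bar\vp g\in H^2(\D^n)\}$. Both $\clm$ and $\clf$ are invariant under multiplication by every coordinate function (indeed by all of $H^\infty(\D^n)$), and $M_\vp\colon\clm\to\clf$ is a unitary intertwining the coordinate shifts.

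The heart of the argument is to convert the invariant subspaces $\clm,\clf$ into scalar inner functions. I would show they are of Beurling type, $\clm=\vp_1 H^2(\D^n)$ and $\clf=\vp_2 H^2(\D^n)$ with $\vp_1,\vp_2\in H^\infty(\D^n)$ inner; by the known description of invariant subspaces of $H^2(\D^n)$ this amounts to verifying that the restricted shift tuples on $\clm$ and $\clf$ are doubly commuting. Granting this, $\vp_2 H^2(\D^n)=\clf=\vp\,\vp_1 H^2(\D^n)$ forces $\vp\vp_1=c\,\vp_2$ for a unimodular constant $c$, whence $\vp=\bar\vp_1(c\vp_2)$; after cancelling any common inner factor (permissible since $|\vp_1|=|\vp_2|=1$) I may assume $\vp_1,\vp_2$ are coprime. \textbf{This Beurling-type representation is the step I expect to be the main obstacle}, since invariant subspaces of $H^2(\D^n)$ are generically far from Beurling type; the task is precisely to extract double commutativity from the partial-isometry hypothesis $\clf=\vp\clm$ with $\vp$ unimodular.

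It remains to upgrade coprimeness to dependence on different variables. Reinterpreting the partial-isometry condition, $T_\vp$ vanishes on $\clm^\perp=\clk_{\vp_1}:=H^2(\D^n)\ominus\vp_1 H^2(\D^n)$, which is equivalent to the invariance $\vp_2\,\clk_{\vp_1}\subseteq\clk_{\vp_1}$. I would then argue coordinate by coordinate: if some $z_i$ occurred in both $\vp_1$ and $\vp_2$, then freezing the remaining variables reduces this invariance to the one-variable situation, where two coprime nonconstant inner functions of a single variable can never satisfy it; this contradiction shows that no coordinate lies in the variable sets of both $\vp_1$ and $\vp_2$. Hence $\vp=\bar\vp_1\vp_2$, i.e. $T_\vp=T_{\vp_1}^*T_{\vp_2}$, with $\vp_1,\vp_2$ inner and depending on different variables. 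Finally, specializing to $n=1$ the disjointness of variables forces one of $\vp_1,\vp_2$ to be constant, so either $\vp$ or $\bar\vp$ is inner, recovering the theorem of Brown and Douglas.
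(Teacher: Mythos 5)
Your sufficiency argument and your preliminaries are sound: the tensor-product computation showing $(T_{\vp_1}^*T_{\vp_2})^*(T_{\vp_1}^*T_{\vp_2})$ is a projection is just a transparent repackaging of the paper's doubly-commuting identities, and your derivation of unimodularity of $\vp$, together with the identifications $\clm=\{f:\vp f\in H^2(\D^n)\}$ and $\clr(T_\vp)=\vp\clm=\{g:\bar\vp g\in H^2(\D^n)\}$ and their invariance under the coordinate shifts, matches what the paper establishes in its Corollary \ref{lemma: norm attain}, Lemma \ref{lemma-invariant} and equation \eqref{eqn: phi f in H2}. But the necessity direction has a genuine gap, and it is exactly the one you flagged yourself: you never prove that $\clm$ and $\clf$ are of Beurling type. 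Nothing you have established yields the double commutativity $R_i^*R_j=R_jR_i^*$ of the restricted shift tuples on these subspaces, and ``granting this'' concedes the entire substance of the theorem. The paper's proof does \emph{not} go through this step at all. Its key lemma (Lemma \ref{prop - main}) shows directly that $\vp$ cannot depend on both $z_i$ and $\bar z_i$ for any $i$: expanding $\vp=\sum_k \bar z_1^k\vp_{-k}\oplus\sum_k z_1^k\vp_k$, it builds a nonzero $f=T_\vp(z_1^{k_0}Z_{kl})-z_1T_\vp(z_1^{k_0-1}Z_{kl})\in\clr(T_\vp)$ independent of $z_1$, and then $\bar\vp f\in H^2(\D^n)$ together with the brother Riesz lemma (Lemma \ref{lemma: f = 0}) kills all coefficients $\vp_k$, $k>0$. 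This partitions the variables into an analytic set $A$ and a co-analytic set $C$; writing $\vp=\sum_{k}\bar z_C^k\vp_{A,k}$, a direct induction shows each $\vp_{A,l}\in\clr(T_\vp)$, whence $\overline{\vp_{A,k}}\vp_{A,l}\in H^2(\D^p)$ for all $k,l$, forcing every $\vp_{A,k}$ to be a scalar multiple of a single inner function $\psi_0$ and giving $\vp=\bar\vp_1\vp_2$ outright. The Beurling representations $\clm=\vp_1H^2(\D^n)$, $\clf=\vp_2H^2(\D^n)$ then drop out as a \emph{consequence} (from $T_\vp^*T_\vp=T_{\vp_1}T_{\vp_1}^*$ and $T_\vp T_\vp^*=T_{\vp_2}T_{\vp_2}^*$), not as an ingredient; your plan inverts the logical order and leaves the hard implication unproved.

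Your endgame has two further soft spots even granting the Beurling step. First, ``cancelling any common inner factor'' to reach coprimality presupposes a gcd theory for inner functions on $\D^n$, which is not available in several variables (there is no inner-outer factorization on the polydisc, and common divisor arguments are delicate). Second, the slicing reduction for disjointness of variables does not go through as stated: the invariance $\vp_2\clk_{\vp_1}\subseteq\clk_{\vp_1}$ does not localize to slices, because $H^2(\D^n)$ is not a direct integral of the one-variable Hardy spaces over frozen torus variables (analyticity in the frozen variables is lost), and coprimality of $\vp_1,\vp_2$ is in any case not inherited by their slices. The paper's coefficient-level argument in Lemma \ref{prop - main} is precisely the device that replaces this slicing heuristic with a rigorous variable-by-variable analysis.
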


In particular, if $n=1$, then the only nonzero Toeplitz operators that are partial isometries are those of the form $T_\vp$ and $T_\vp^*$, where $\vp \in H^\infty(\D)$ is an inner function. This was proved by Brown and Douglas in \cite{BD}. Actually, as we will see soon in this case that $T_\vp$ is not only an isometry but a shift.

A key ingredient in the proof of the Brown and Douglas theorem is the classical Beurling theorem \cite{Beurling}. Recall that the Beurling theorem connects inner functions in $H^\infty(\D)$ with shift invariant subspaces of $H^2(\D)$. However, in the present case of higher dimensions, this approach does not work, as is well known, Beurling type classification does not hold for shift invariant subspaces of $H^2(\D^n)$, $n >1$ (however, see the proof of Theorem \ref{thm: phi is shift}). Here, we exploit more analytic and geometric structure of $H^2(\D^n)$ and $L^2(\T^n)$ to achieve the main goal. Section \ref{sec:main} contains the proof of the above theorem.

Along the way to the proof of Theorem \ref{thm- main}, in Section \ref{sec: prep} we prove some basic properties of Toeplitz operators on the polydisc. Some of these observations are perhaps known (if not readily available in the literature) to experts, but they are necessary for our purposes here. We also remark that the proof of $\|T_\vp\| = \|\vp\|_{\infty}$, $\vp \in L^\infty(\T^n)$, in Proposition \ref{prop: norm T phi} seems to be different even in the case of $n=1$, as it avoids the standard techniques of the spectral radius formula (see Brown and Halmos \cite[page 99]{BH} and the monographs \cite{Douglas book, Martinez book, Nikolski}).

Moreover, in Section \ref{sec: inner function}, we prove the following result, which connects inner functions with shift operators, and is also of independent interest: \textit{If $\vp \in H^\infty(\D^n)$ is a nonconstant inner function, then $M_\vp$ is a shift.}

\noindent Here, and in what follows, $M_\vp$ denotes the analytic Toeplitz operator $T_\vp$ whenever $\vp \in H^\infty(\D^n)$. In this case, $M_\vp$ is simply the standard multiplication operator on $H^2(\D^n)$, that is, $M_\vp f = \vp f$ for all $f \in H^2(\D^n)$.

In Section \ref{sec: hypo}, as a first application to Theorem \ref{thm- main}, we classify partially isometric hyponormal Toeplitz operators. Recall that a bounded linear operator $T$ on some Hilbert space is called hyponormal if $T^* T - T T^* \geq 0$. In Corollary \ref{coro-hypo}, we prove the following: If $T_\vp$, $\vp \in L^\infty(\T^n)$, is a partial isometry, then $T_\vp$ is hyponormal if and only if $\vp$ is an inner function in $H^\infty(\D^n)$.

\noindent Secondly, following the Halmos and Wallen \cite{HW} notion of power partial isometries (also see an Huef, Raeburn and Tolich \cite{Raeburn}), in Corollary \ref{coro-power partial} we prove that partially isometric Toeplitz operators are always power partial isometry. In Theorem \ref{thm: shift, co-shift, trunc}, we further exploit the Halmos and Wallen models of power partial isometries, and obtain a connection between partially isometric Toeplitz operators, shifts, co-shifts, and direct sums of truncated shifts.

Finally, collecting all these results together, from an operator theoretic point of view, we obtain the following refinement of Theorem \ref{thm- main}:

\noindent \textit{Suppose $T_\vp$, $\vp \in L^\infty(\T^n)$, is partially isometric. Then, up to unitary equivalence, $T_\vp$ is either a shift, or a co-shift, or a direct sum of truncated shifts.}

\noindent We stress that the latter possibility is only restricted to the $n>1$ case.

\section{Preparatory results}\label{sec: prep}

In this section, we develop the necessary tools leading to the proof of Theorem \ref{thm- main}. In this respect, we again remark that in what follows, we will often identify (via radial limits) $H^2(\D^n)$ with $H^2(\T^n)$ without further explanation. Given $\vp \in L^\infty(\T^n)$, we denote by $L_\vp$ the \textit{Laurent operator} on $L^2(\T^n)$, that is, $L_\vp f = \vp f$ for all $f \in L^2(\T^n)$. Note that
\[
\|L_\vp\|_{\clb(L^2(\T^n))} = \|\vp\|_\infty,
\]
where $\|\vp\|_\infty$ denotes the essential supremum norm of $\vp$. The Toeplitz operator $T_\vp$ with symbol $\vp \in L^\infty(\T^n)$ is given by
\[
T_\vp = P_{H^2(\D^n)} L_\vp|_{H^2(\D^n)}.
\]
Clearly, $T_\vp \in \clb(H^2(\D^n))$. Also note that a function $f = \sum\limits_{k \in \Z^n} a_{k} z^{k} \in L^2(\T^n)$ is in $H^2(\D^n)$ if and only if $a_{k} = 0$ whenever at least one of the $k_j$, $j = 1, \ldots, n$, in $k = (k_1, \ldots, k_n)$ is negative.

We start with a several variables analogue of brother Riesz theorem. We denote the set of zeros of a scalar-valued function $f$ by $\clz(f)$.

\begin{lemma}\label{lemma: f = 0}
If $f \in H^2(\D^n)$ is nonzero, then $\bm{m}(\clz(f)) =0$.
\end{lemma}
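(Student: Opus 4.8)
The plan is to argue by induction on the number of variables $n$, taking the classical one variable theorem as the base case and reducing the higher dimensional statement to it by slicing together with Fubini's theorem. For $n=1$ this is precisely the classical uniqueness (Szeg\H{o}) theorem for the Hardy space: a nonzero $f \in H^2(\D)$ satisfies $\log|f| \in L^1(\T)$, so $|f| > 0$ almost everywhere on $\T$ and hence $\bm{m}(\clz(f)) = 0$. This is the only external input; everything else is bookkeeping that transfers it up in dimension.

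For the inductive step, assume the result in $n-1$ variables, let $f \in H^2(\D^n)$ be nonzero, and write $z = (z_1, z')$ with $z' = (z_2, \ldots, z_n)$. Collecting monomials according to the power of $z_1$, I would expand
\[
f(z_1, z') = \sum_{j \geq 0} z_1^j \, g_j(z'),
\]
where each $g_j \in H^2(\D^{n-1})$ and $\sum_j \|g_j\|^2 = \|f\|^2 < \infty$. By Tonelli's theorem $\int_{\T^{n-1}} \sum_j |g_j(z')|^2 \, d\bm{m}(z') < \infty$, so for almost every $z'$ one has $\sum_j |g_j(z')|^2 < \infty$; for such $z'$ the function $f_{z'}(z_1) := \sum_j g_j(z') z_1^j$ is a genuine element of $H^2(\D)$, and it coincides almost everywhere on $\T$ with the boundary slice $z_1 \mapsto f(z_1, z')$. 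Splitting $\bm{m} = \bm{m}_1 \times \bm{m}_{n-1}$ along $\T^n = \T \times \T^{n-1}$, Fubini's theorem then gives
\[
\bm{m}(\clz(f)) = \int_{\T^{n-1}} \bm{m}_1\big(\clz(f_{z'})\big) \, d\bm{m}_{n-1}(z').
\]

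To see the integrand vanishes almost everywhere, I would split into two cases inside the integral. If $f_{z'} \not\equiv 0$, the base case applies and $\bm{m}_1(\clz(f_{z'})) = 0$. On the other hand, $f_{z'} \equiv 0$ exactly when $g_j(z') = 0$ for every $j$, that is, when $z' \in \bigcap_j \clz(g_j)$. Since $f \neq 0$, at least one $g_{j_0}$ is nonzero, and the induction hypothesis yields $\bm{m}_{n-1}(\clz(g_{j_0})) = 0$; hence the set of $z'$ for which $f_{z'} \equiv 0$ is contained in a $\bm{m}_{n-1}$-null set. Consequently the integrand is zero for almost every $z'$, and $\bm{m}(\clz(f)) = 0$, completing the induction.

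The main point requiring care is the measure-theoretic interface between the two variable regimes: one must verify that for almost every $z'$ the pointwise boundary slice $z_1 \mapsto f(z_1, z')$ agrees with the honest $H^2(\D)$ function $\sum_j g_j(z') z_1^j$ (so that the one variable theorem genuinely applies to it), and that the relevant zero sets are measurable so that Fubini is legitimate. Both facts follow by matching Fourier coefficients in the $z_1$ variable via Fubini together with the completeness of $L^2$, but they are the steps where the argument would break down if handled carelessly.
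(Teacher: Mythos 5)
Your proof is correct, and its skeleton matches the paper's: induction on $n$, slicing in one variable, Fubini--Tonelli, and the one-variable uniqueness theorem for $H^2(\D)$ as the only analytic input. Where you genuinely diverge is in handling the exceptional set of $z'$ for which the slice $f_{z'}$ vanishes identically. The paper (written out for $n=2$) shows this set $\mathcal{Z}$ is null by contradiction: if $m(\mathcal{Z})>0$, then every a.e.-defined slice of $f$ in the \emph{other} variable vanishes on $\mathcal{Z}$, hence vanishes identically by the one-variable (brother Riesz) theorem, forcing $f\equiv 0$. You instead expand $f=\sum_{j\geq 0} z_1^j g_j(z')$ and observe that the exceptional set equals $\bigcap_j \clz(g_j)\subseteq \clz(g_{j_0})$, which is null by the induction hypothesis applied to a nonzero coefficient $g_{j_0}\in H^2(\D^{n-1})$. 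Your route makes the induction hypothesis do real work (the paper's induction invokes only the one-variable theorem, in both directions, and leaves the $n>2$ bookkeeping implicit), avoids the cross-slicing contradiction, and explicitly addresses the measure-theoretic interface --- that for a.e.\ $z'$ the pointwise boundary slice agrees with the honest $H^2(\D)$ function $\sum_j g_j(z')z_1^j$, and that the zero sets involved are measurable --- points the paper passes over silently. What the paper's version buys in exchange is economy: no coefficient expansion, just slices and the classical theorem. Both arguments are valid.
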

\begin{proof}
Let $m$ denote the normalized Lebesgue measure on $\T$. Suppose $f$ is a nonzero function in $H^2(\D^2)$. For $w_1$ and $w_2$ in $\T$ a.e., we define the slice functions $f_{w_1}$ and $f_{w_2}$ by $f_{w_1}(z) = f(w_1, z)$ and $f_{w_2}(z) = (z, w_2)$ for all $z \in \T$. Set
\[
\mathcal{Z} = \{w_2 \in \T: f_{w_2} \equiv 0\}.
\]
Note that $\mathcal{Z} \subseteq \clz(f_{w_1})$ for all $w_1 \in \T$. If $m(\mathcal{Z}) >0$, then the classical brother Riesz theorem implies that $f$ is identically zero. Therefore, $m(\mathcal{Z}) = 0$. Evidently
\[
m(\mathcal{Z}(f_{w_2})) =
\begin{cases}
1 & \mbox{if } w_2 \in \mathcal{Z} \\
0 & \mbox{if } w_2 \in \mathcal{Z}^c,
\end{cases}
\]
and hence $w_2 \mapsto m(\mathcal{Z}(f_{w_2}))$ is a measurable function. By the Tonelli and Fubini theorem, we see that
\[
\begin{split}
(m \times m)(\clz(f)) & = \int_{\T} m(\mathcal{Z}(f_{z_2})) \; dm(z_2)
\\
& = \int_{\mathcal Z} m(\mathcal{Z}(f_{z_2})) \; dm(z_2) + \int_{\mathcal{Z}^c} m(\mathcal{Z}(f_{z_2})) \; dm(z_2)
\\
& = 0.
\end{split}
\]
The rest of the proof now follows easily by the induction on $n$.
\end{proof}

We refer to Rudin \cite[Theorem 3.3.5]{WR} for a different proof of the above lemma (even in the context of functions in the Nevanlinna class). Also, see \cite{MT} for the same for functions in $H^\infty(\D^n)$. However, the present proof is direct and avoids the use of heavy machinery from function theory.

We now prove that $\|T_\vp\|_{\clb(H^2(\D^n))} = \|\vp\|_\infty$. As we have pointed out already in the introductory section above, this may be known to experts. However, even when $n=1$, the present proof seems to be direct as it avoids the standard techniques of the spectral radius formula. For instance, see the classic monograph \cite[Corollary 7.8]{Douglas book} and the recent monograph \cite[Corollary 3.3.2]{Martinez book}.

\begin{proposition}\label{prop: norm T phi}
$\|T_\vp\| = \|\vp\|_\infty$ for all $\vp \in L^\infty(\T^n)$.
\end{proposition}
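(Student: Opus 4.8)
The plan is to prove the two inequalities separately, with essentially all the content residing in the lower bound $\|T_\vp\| \ge \|\vp\|_\infty$. The upper bound is immediate: since $T_\vp = P_{H^2(\D^n)} L_\vp|_{H^2(\D^n)}$ and $P_{H^2(\D^n)}$ is a contraction, we get $\|T_\vp\| \le \|L_\vp\| = \|\vp\|_\infty$. The whole difficulty, and the point at which I would depart from the usual spectral-radius argument, is to exhibit unit vectors in $H^2(\D^n)$ on which $T_\vp$ acts with norm arbitrarily close to $\|\vp\|_\infty$.

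For the lower bound, the idea is to transport the action of the Laurent operator $L_\vp$, whose norm is exactly $\|\vp\|_\infty$, into the Hardy space by multiplying with a high power of $z_1 \cdots z_n$. First I would fix a trigonometric polynomial $f$ with $\|f\| = 1$; since trigonometric polynomials are dense in $L^2(\T^n)$ and $L_\vp$ is bounded, it suffices to approximate $\|\vp f\|$ from below by $\|T_\vp g\|$ for suitable unit vectors $g \in H^2(\D^n)$, and then take the supremum over such $f$, recalling that $\sup_{\|f\| = 1} \|\vp f\| = \|L_\vp\| = \|\vp\|_\infty$. Writing $f = \sum_{|k_j| \le M} a_k z^k$, I note that $g_N := (z_1 \cdots z_n)^N f$ lies in $H^2(\D^n)$ for every $N \ge M$, since the shift by $(N, \ldots, N)$ pushes the entire Fourier support of $f$ into the positive octant, and that multiplication by the unimodular monomial $(z_1 \cdots z_n)^N$ is unitary on $L^2(\T^n)$, so $\|g_N\| = \|f\| = 1$.

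The key computation is then to control $T_\vp g_N = P_{H^2(\D^n)}\big((z_1 \cdots z_n)^N \vp f\big)$. Expanding $\vp f = \sum_{l \in \Z^n} c_l z^l \in L^2(\T^n)$, the vector $(z_1 \cdots z_n)^N \vp f$ has Fourier support shifted by $(N, \ldots, N)$, so the mass removed by the projection is exactly
\[
\big\| (z_1 \cdots z_n)^N \vp f - T_\vp g_N \big\|^2 = \sum_{\substack{l \in \Z^n \\ l_j < -N \ \text{for some}\ j}} |c_l|^2,
\]
which tends to $0$ as $N \to \infty$ because $\sum_{l} |c_l|^2 = \|\vp f\|^2 < \infty$ and the index set shrinks to the empty set. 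Since $\|(z_1 \cdots z_n)^N \vp f\| = \|\vp f\|$, this gives $\|T_\vp g_N\| \to \|\vp f\|$, hence $\|T_\vp\| \ge \|\vp f\|$ for every such $f$. Taking the supremum over trigonometric polynomials $f$ of unit norm then yields $\|T_\vp\| \ge \|\vp\|_\infty$.

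I expect the main obstacle to be the bookkeeping in this last estimate: one must verify simultaneously that the shift by $(N, \ldots, N)$ carries all of the (finite) Fourier support of $f$ into $\Z^n_{\ge 0}$, justifying $g_N \in H^2(\D^n)$, while only a vanishing tail of the Fourier support of $\vp f$ is lost under $P_{H^2(\D^n)}$. Both facts are elementary, but they are precisely where the several-variable geometry of the positive octant replaces the one-variable spectral-radius input, and so they are the part that deserves to be spelled out carefully.
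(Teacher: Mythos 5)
Your argument is correct, and it runs on the same engine as the paper's proof --- multiplication by the unimodular monomial $(z_1\cdots z_n)^N$, which is unitary on $L^2(\T^n)$ and pushes the relevant Fourier support into $\Z_+^n$ --- but the implementation is genuinely different. The paper (following Knese) works with the sesquilinear form: writing $\|T_\vp\| = \sup\{|\langle \vp f, g\rangle| : f, g \in \mathbb{C}[z_1,\ldots,z_n],\ \|f\|, \|g\| \le 1\}$ by density, it observes that this supremum is unchanged if $f, g$ range over Laurent polynomials, because $\langle \vp f, g\rangle = \langle \vp z^N f, z^N g\rangle$ and for $N$ large both $z^N f$ and $z^N g$ are analytic polynomials; this yields $\|T_\vp\| = \|L_\vp\| = \|\vp\|_\infty$ as an exact identity with no limiting step, since pairing against a vector already in $H^2(\D^n)$ silently implements the projection $P_{H^2(\D^n)}$. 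You instead work with vectors rather than forms: because $\vp f$ is merely in $L^2(\T^n)$ and not a trigonometric polynomial, $P_{H^2(\D^n)}\big((z_1\cdots z_n)^N \vp f\big)$ genuinely loses mass, and you must show the discarded tail $\sum_{l:\, l_j < -N \ \text{for some}\ j} |c_l|^2$ vanishes as $N \to \infty$ --- which you do correctly, since these index sets decrease to the empty set while $\sum_l |c_l|^2 = \|\vp f\|^2 < \infty$, and the remaining steps (unitarity giving $\|g_N\| = 1$, $g_N \in H^2(\D^n)$ for $N \ge M$, and the supremum over unit-norm trigonometric polynomials recovering $\|L_\vp\|$) are all sound. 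Both routes avoid the spectral radius formula of Brown--Halmos; the paper's form trick buys a limit-free, essentially two-line proof, while yours costs the tail estimate but buys something the paper's proof does not exhibit: an explicit sequence of unit vectors $g_N \in H^2(\D^n)$ with $\|T_\vp g_N\| \to \|\vp f\|$, i.e., a constructive display of near-maximizing vectors for $T_\vp$.
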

\begin{proof}
Let $\cll$ denote the set of Laurent polynomials in $n$ variables. We compute
\[
\begin{split}
\|T_\vp\| & = \sup \{|\langle \vp f, g \rangle|: f, g \in H^2(\D^n), \|f\|, \|g\| \leq 1\}
\\
& = \sup \{|\langle \vp f, g \rangle|: f, g \in \mathbb{C}[z_1, \ldots, z_n], \|f\|, \|g\| \leq 1\} \quad \text{ (by density of polynomials)}
\\
& = \sup \{|\langle \vp f, g \rangle|: f, g \in \cll, \|f\|, \|g\| \leq 1\}
\\
& = \|L_\vp\|
\\
& = \|\vp\|_\infty.
\end{split}
\]
Note the third equality follows because any Laurent polynomial can be multiplied by a monomial to put it into polynomials. This completes the proof of the proposition.
\end{proof}

The above elegant proof is due to Professor Greg Knese and replaces our original proof, which was longer and technical.

Before proceeding to the proof of the main theorem, we conclude this section with a result concerning unimodular functions in $L^\infty(\T^n)$.

\begin{corollary}\label{lemma: norm attain}
Suppose $\vp$ is a nonzero function in $L^\infty(\T^n)$. If $\|T_\vp f\| = \|\vp\|_\infty \|f\|$ for some nonzero $f \in H^2(\D^n)$, then $\frac{1}{\|\vp\|_\infty}\vp$ is unimodular in $L^\infty(\T^n)$.
\end{corollary}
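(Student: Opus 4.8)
The plan is to squeeze $\|T_\vp f\|$ between two quantities that coincide exactly when $\frac{1}{\|\vp\|_\infty}\vp$ is unimodular, and then invoke Lemma \ref{lemma: f = 0} to remove an exceptional null set. First I would record the chain of inequalities
\[
\|T_\vp f\| = \|P_{H^2(\D^n)} L_\vp f\| \le \|L_\vp f\| = \|\vp f\|_{L^2(\T^n)} \le \|\vp\|_\infty \|f\|,
\]
in which the first inequality is the contractivity of the orthogonal projection $P_{H^2(\D^n)}$ and the second uses $|\vp| \le \|\vp\|_\infty$ a.e. The hypothesis $\|T_\vp f\| = \|\vp\|_\infty \|f\|$ forces every inequality here to be an equality.

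The equality $\|\vp f\|_{L^2(\T^n)} = \|\vp\|_\infty \|f\|$ is the one I would exploit. Squaring it and rearranging gives
\[
\int_{\T^n} \big(\|\vp\|_\infty^2 - |\vp|^2\big)\, |f|^2 \, d\bm{m} = 0.
\]
Since $|\vp| \le \|\vp\|_\infty$ a.e., the integrand is nonnegative, so it must vanish a.e.; that is, $|\vp(z)| = \|\vp\|_\infty$ at a.e. point $z$ with $f(z) \ne 0$, i.e. a.e. on $\T^n \setminus \clz(f)$.

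The decisive step is then to discard the zero set of $f$. Because $f$ is nonzero, Lemma \ref{lemma: f = 0} yields $\bm{m}(\clz(f)) = 0$, whence $|\vp| = \|\vp\|_\infty$ holds a.e. on all of $\T^n$. This says precisely that $\frac{1}{\|\vp\|_\infty}\vp$ is unimodular, completing the argument. I expect the only real content to be this last use of Lemma \ref{lemma: f = 0}; the projection inequality plays no further role, and no equality information need be extracted from it. The main point to handle carefully is simply that the pointwise identity for $|\vp|$ is a priori valid only off $\clz(f)$, and it is exactly the measure-zero zero set that upgrades it to an a.e.\ statement on the whole torus.
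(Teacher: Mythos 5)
Your proof is correct and follows essentially the same route as the paper: both force the equality $\|\vp f\|_{L^2(\T^n)} = \|\vp\|_\infty\|f\|$ from the contractive chain through $L_\vp$, convert it to the vanishing of $\int_{\T^n}(\|\vp\|_\infty^2 - |\vp|^2)|f|^2\,d\bm{m}$, and invoke Lemma \ref{lemma: f = 0} to discard $\clz(f)$. You merely spell out (correctly) the intermediate steps that the paper's more compressed proof leaves implicit, including its normalization $\|\vp\|_\infty = 1$ via Proposition \ref{prop: norm T phi}.
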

\begin{proof}
In view of Proposition \ref{prop: norm T phi}, without loss of generality we may assume that $\|T_\vp\|=1$. Then
\[
\int_{\T^n} |\vp(z)|^2 |f(z)|^2 d\bm{m}(z) = \int_{\T^n} |f(z)|^2 d\bm{m}(z).
\]
By Lemma \ref{lemma: f = 0}, $|\vp(z)| = 1$ for all $z \in \T^n$ a.e. and the result follows.
\end{proof}

In particular, if $T_\vp$, $\vp \in L^\infty(\T^n)$, is a partial isometry, then $\vp$  is unimodular.

\section{Proof of Theorem \ref{thm- main}}\label{sec:main}

In this section, without explicitly mentioning it in each instance, we always assume that $T_\vp$, $\vp \in L^\infty(\T^n)$, is partially isometric. Also, we frequently make use of the identification $H^2(\D^n) \cong H^2(\T^n)$ without mentioning it (see Section \ref{sec: prep}).

For simplicity we denote by $\clr(T)$ the range of a bounded linear operator $T$. Clearly, $\clr(T_\vp)$ is a closed subspace of $H^2(\D^n)$.

\begin{lemma}\label{lemma-invariant}
$\clr(T_\vp)$ is invariant under $M_{z_i}$, $i=1,\ldots, n$.
\end{lemma}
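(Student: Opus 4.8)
The plan is to pass through the initial space of $T_\vp$ rather than to argue with the range projection directly. Since $T_\vp$ is a nonzero partial isometry we have $\|T_\vp\| = 1$, and this norm is attained on the initial space; hence by the remark following Corollary~\ref{lemma: norm attain} the symbol $\vp$ is unimodular. In particular $L_\vp$ is unitary on $L^2(\T^n)$, so $\|\vp f\| = \|f\|$ for every $f \in H^2(\D^n)$. Write $\cle := (\ker T_\vp)^\perp$ for the initial space, on which $T_\vp$ acts isometrically.

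The one substantive step I would carry out first is that $T_\vp$ restricts to honest multiplication by $\vp$ on $\cle$. For $f \in \cle$ we have $T_\vp f = P_{H^2(\D^n)}(\vp f)$ together with $\|T_\vp f\| = \|f\| = \|\vp f\|$, so the Pythagorean identity
\[
\|\vp f\|^2 = \|P_{H^2(\D^n)}(\vp f)\|^2 + \|(I - P_{H^2(\D^n)})(\vp f)\|^2
\]
forces $(I - P_{H^2(\D^n)})(\vp f) = 0$, that is, $\vp f \in H^2(\D^n)$ and $T_\vp f = \vp f$. This identifies $\cle = \{f \in H^2(\D^n) : \vp f \in H^2(\D^n)\}$, and since $T_\vp$ annihilates $\ker T_\vp$ it gives $\clr(T_\vp) = T_\vp \cle = \vp\, \cle$.

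With this in hand the invariance is a short commutation argument. First, $\cle$ itself is $M_{z_i}$-invariant: if $f \in \cle$ then $\vp f \in H^2(\D^n)$, whence both $z_i f$ and $\vp(z_i f) = z_i(\vp f)$ lie in $H^2(\D^n)$, so $z_i f \in \cle$. Therefore, given $h = \vp f \in \clr(T_\vp)$ with $f \in \cle$, we obtain $M_{z_i} h = z_i \vp f = \vp(z_i f) \in \vp\, \cle = \clr(T_\vp)$, which is exactly the claim.

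The main obstacle is precisely the first step, namely showing that $\vp f$ never leaves $H^2(\D^n)$ for $f$ in the initial space, since this is where unimodularity of $\vp$ and the norm-preserving behaviour of a partial isometry must be combined. Once the identification $T_\vp|_{\cle} = M_\vp|_{\cle}$ is established, the remainder is merely the observation that multiplication by $z_i$ commutes with multiplication by $\vp$ and preserves $H^2(\D^n)$.
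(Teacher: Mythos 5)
Your argument is correct in outline and takes a genuinely different route from the paper: you work on the initial space $\cle = (\ker T_\vp)^\perp$, show $T_\vp|_{\cle} = M_\vp|_{\cle}$, and describe the range as $\vp\,\cle$, whereas the paper works on the final space $\clr(T_\vp)$, uses the isometry of $T_\vp^*$ there together with the inequality $\|T_\vp^* f\| \leq \|L_{\bar{\vp}} f\|$ to deduce $\bar{\vp} f \in H^2(\D^n)$ for every $f \in \clr(T_\vp)$, and then verifies invariance by computing $T_\vp T_\vp^*(z_i f) = z_i f$ with the range projection. The two proofs are mirror images --- both hinge on the same Pythagoras trick, that equality of norms forces multiplication by a unimodular symbol to stay inside $H^2(\D^n)$ --- but applied on opposite sides: your version buys an explicit multiplicative description of the range, while the paper's version produces the containment $\bar{\vp}\, \clr(T_\vp) \subseteq H^2(\D^n)$ (their equation \eqref{eqn: phi f in H2}), which is reused later in Lemma \ref{prop - main} and in the proof of Theorem \ref{thm- main}; if your proof replaced the paper's, that containment would still have to be derived separately.

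There is, however, one step you assert without proof, and it is load-bearing: the identification $\cle = \{f \in H^2(\D^n) : \vp f \in H^2(\D^n)\}$. Your Pythagorean argument establishes only the inclusion $\cle \subseteq \{f : \vp f \in H^2(\D^n)\}$, but the invariance step uses the \emph{reverse} inclusion (``$\vp(z_i f) \in H^2(\D^n)$, so $z_i f \in \cle$''). The reverse inclusion is true and one line away: if $g \in H^2(\D^n)$ and $\vp g \in H^2(\D^n)$, then $\|T_\vp g\| = \|P_{H^2(\D^n)}(\vp g)\| = \|\vp g\| = \|g\|$ by unimodularity, while on the other hand $T_\vp = T_\vp P_{\cle}$ and $T_\vp$ is isometric on $\cle$, so $\|T_\vp g\| = \|P_{\cle} g\|$; equality then forces $P_{\cle} g = g$, that is, $g \in \cle$. (Alternatively, decompose $g = g_1 + g_2$ with $g_1 \in \cle$ and $g_2 \in \ker T_\vp$; then $\vp g_2 = \vp g - \vp g_1 \in H^2(\D^n)$, so $0 = T_\vp g_2 = \vp g_2$, and unimodularity of $\vp$ gives $g_2 = 0$.) With this line inserted, your proof is complete and correct.
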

\begin{proof}
Note that, since $\|T_\vp\| = 1$, we have $\|\vp\|_{\infty} = 1$. Suppose $f \in \clr(T_\vp)$. By Corollary \ref{lemma: norm attain}, it follows that $\vp$  is unimodular, and hence $\|L_{\bar{\vp}} f\| = \|f\|$. Since $T^*_\vp$ is an isometry on $\clr(T_\vp)$, we have
\[
\|f\| = \|T_\vp^* f\| \leq \|L_{\bar{\vp}} f\| = \|\bar{\vp}f\| = \|f\|.
\]
Therefore, $\|P_{H^2(\D^n)} ({\bar{\vp}} f) \|  = \|{\bar{\vp}} f\|$, that is, $P_{H^2(\D^n)} ({\bar{\vp}} f) = \bar{\vp} f$. This implies that
\begin{equation}\label{eqn: phi f in H2}
\bar{\vp} f \in H^2(\D^n),
\end{equation}
and hence $z_i \bar{\vp} f \in H^2(\D^n)$ for all $i=1, \ldots, n$. Then
\[
T_\vp T_\vp^* (z_i f) = T_\vp (\bar{\vp} z_i f) = P_{H^2(\D^n)} (|\vp|^2 z_i f) = P_{H^2(\D^n)} (z_i f) = z_i f,
\]
implies that $z_i f \in \clr(T_\vp)$ for all $i = 1, \ldots, n$. This completes the proof.
\end{proof}

In what follows, if $i\in \{1, \ldots, n\}$ and $k_i$ is a negative integer, then we write $z_i^{k_i} = \bar{z}_i^{-k_i}$.

\begin{lemma}\label{prop - main}
For each $i = 1, \ldots, n$, the function $\vp$ cannot depend on both $z_i$ and $\bar{z}_i$ variables at a time.
\end{lemma}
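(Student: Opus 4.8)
The plan is to argue by contradiction, reducing everything to a bookkeeping of the lowest powers of the single variable $z_i$, with the integral domain property of $H^2(\D^{n-1})$ (Lemma \ref{lemma: f = 0}) preventing the leading coefficients from cancelling.

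First I would record the symmetric picture. Since $T_\vp$ is a partial isometry, Corollary \ref{lemma: norm attain} gives that $\vp$ is unimodular, Lemma \ref{lemma-invariant} gives that $\clm := \clr(T_\vp)$ is invariant under every $M_{z_j}$, and \eqref{eqn: phi f in H2} gives $\bar\vp f \in H^2(\D^n)$ for all $f \in \clm$. Applying the same three facts to $T_\vp^* = T_{\bar\vp}$ (again a partial isometry) yields that $\cln := \clr(T_\vp^*)$ is invariant under every $M_{z_j}$ and that $\vp g \in H^2(\D^n)$ for all $g \in \cln$. Both $\clm$ and $\cln$ are nonzero, because $\vp \ne 0$ forces $T_\vp \ne 0$ by Proposition \ref{prop: norm T phi}. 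Finally, using $|\vp| = 1$ I obtain the crucial cross relations $\bar\vp f \in \cln$ for $f \in \clm$ and $\vp g \in \clm$ for $g \in \cln$, since $\vp(\bar\vp f) = f \in H^2(\D^n)$ and $\bar\vp(\vp g) = g \in H^2(\D^n)$.

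Fix $i$ and, writing $z_i = z_1$ for brevity, expand functions in powers of $z_1$ with coefficients in the remaining variables. For nonzero $F \in H^2(\D^n)$ let $\nu(F) \ge 0$ be the least power of $z_1$ occurring, with leading coefficient $\ell(F) \in H^2(\D^{n-1}) \setminus \{0\}$. By Lemma \ref{lemma: f = 0} the space $H^2(\D^{n-1})$ is an integral domain, so leading coefficients never cancel: $\nu$ is additive on products, and multiplying $F$ by the extreme $z_1$-frequency term of $\vp$ shifts $\nu(F)$ by exactly that frequency. Put $a := \min\{\nu(F) : 0 \ne F \in \clm\}$ and $b := \min\{\nu(G) : 0 \ne G \in \cln\}$; both minima are attained. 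Now suppose, for contradiction, that $\vp$ depends on both $z_1$ and $\bar z_1$, and let $N^+ \ge 1$ be its highest positive and $-N^- \le -1$ its lowest negative $z_1$-frequency. Choosing a minimizer $f \in \clm$ with $\nu(f) = a$, the lowest $z_1$-power of $\bar\vp f$ is $a - N^+$, its coefficient being the conjugated top-frequency coefficient of $\vp$ times $\ell(f)$, nonzero by the integral domain property; since $\bar\vp f \in \cln$, this forces $a - N^+ \ge b$. Symmetrically, a minimizer $g \in \cln$ gives $\nu(\vp g) = b - N^-$ with $\vp g \in \clm$, hence $b - N^- \ge a$. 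Adding the two inequalities yields $N^+ + N^- \le 0$, contradicting $N^\pm \ge 1$.

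The step I expect to be the genuine obstacle is the tacit finiteness of $N^+$ and $N^-$: a unimodular symbol can have infinite one-sided $z_1$-spectrum (this is exactly what happens for inner symbols), in which case the extreme frequency need not exist and the lowest coefficient of $\bar\vp f$ could, a priori, be destroyed by cancellation among infinitely many terms. I would handle this by recasting the order count as divisibility, namely $\clm \subseteq z_1^a H^2(\D^n)$ and $\cln \subseteq z_1^b H^2(\D^n)$, and transporting the minimal orders through the unitary $M_\vp \colon \cln \to \clm$ (which intertwines the $z_1$-shifts), so that the argument requires only the survival of a single lowest-order term rather than a global two-sided degree bound; as a fallback one can first prove the statement for trigonometric-polynomial symbols, where the computation above is rigorous, and then bootstrap. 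Everything else is routine once the no-cancellation principle furnished by Lemma \ref{lemma: f = 0} is in place.
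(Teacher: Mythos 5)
Your setup is sound: the cross relations $\bar\vp f \in \clr(T_\vp^*)$ for $f \in \clr(T_\vp)$ and $\vp g \in \clr(T_\vp)$ for $g \in \clr(T_\vp^*)$ are correct consequences of \eqref{eqn: phi f in H2} applied to $T_\vp$ and its adjoint, and your frequency computation is rigorous when $\vp$ is a trigonometric polynomial in $z_1$. But the obstacle you yourself flag is the genuine gap, and neither of your proposed patches closes it. The symbols at stake are precisely those of the form $\bar\vp_1 \vp_2$ with $\vp_1, \vp_2$ inner, so outside the rational case the one-sided $z_1$-spectrum is infinite and $N^{\pm}$ simply do not exist. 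The divisibility recast does not repair this: after normalizing $\clm \subseteq z_1^a H^2(\D^n)$, $\cln \subseteq z_1^b H^2(\D^n)$ and setting $\theta = z_1^{b-a}\vp$, the constraint becomes $\bar\theta f \in H^2(\D^n)$, and for a minimizer $f = \sum_{j \geq 0} z_1^j f_j$ (with $f_0 \neq 0$ but in general infinitely many nonzero $f_j$) the coefficient of $\bar z_1^m$, $m \geq 1$, in $\bar\theta f$ is the infinite series $\sum_{j \geq 0} \bar\theta_{j+m} f_j$, which can vanish by cancellation among infinitely many nonzero terms. The ``survival of a single lowest-order term'' is exactly what is not guaranteed: minimality of $\nu(f)$ controls only $f_0$, while all higher components of $f$ feed into the same negative-frequency coefficient. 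The polynomial-then-bootstrap fallback is also not available as stated: you give no approximation scheme, and being a partial isometry is not stable under truncating or approximating the symbol, so the polynomial case does not propagate.

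The missing idea --- and the heart of the paper's proof --- is to use the $M_{z_1}$-invariance of $\clr(T_\vp)$ (Lemma \ref{lemma-invariant}) to \emph{manufacture} a nonzero element of the range that is genuinely independent of $z_1$, rather than merely of minimal $z_1$-order. Assuming $\vp_{-k_0} \neq 0$ for some $k_0 \neq 0$ and choosing a monomial $Z_{kl}$ in the remaining variables so that the relevant Fourier coefficient survives, the paper sets
\[
f := T_\vp (z_1^{k_0} Z_{kl}) - z_1 T_\vp(z_1^{k_0-1}Z_{kl}),
\]
which lies in $\clr(T_\vp)$ by invariance, is nonzero, and has \emph{only} the $z_1$-order-zero component. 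For such $f$, the coefficient of $\bar z_1^k$ in $\bar\vp f$ is the single product $f \bar\vp_k$ --- no infinite sums, hence no possible cancellation --- so $\bar\vp f \in H^2(\D^n)$ forces $\vp_k = 0$ for all $k > 0$ by Lemma \ref{lemma: f = 0}, contradicting dependence on $z_1$. Note that this argument never needs extreme frequencies of $\vp$ at all. Your valuation framework becomes a proof only once such a one-term element is in hand; producing it is the step your proposal lacks.
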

\begin{proof}
We shall prove this by contradiction. Assume without loss of generality that $\vp$ depends on both $z_1$ and $\bar{z}_1$. Then
\[
\vp = \sum_{k=1}^{\infty} \bar{z}_1^k \vp_{-k} \oplus \sum_{k=0}^{\infty} z_1^k \vp_k,
\]
and  $\vp_{-k_0} \neq 0$ for some $k_0 \neq 0$. Here $\vp_k \in L^2(\T^{n-1})$, $k \in \mathbb{Z}$, is a function of $\{z_i, \bar{z}_j: i,j=2, \ldots, n\}$. There exist non-negative integers $k_2,\ldots, k_{n}$, and $l_2, \ldots, l_n$ such that the coefficient of $\bar{z}_2^{k_2} \cdots \bar{z}_n^{k_n} z_2^{l_2}\cdots z_n^{l_n}$ in the expansion of the Fourier series of $\vp_{-k_0}$ is nonzero. Set
\[
Z_{kl}:={z}_2^{k_2} \cdots {z}_n^{k_n} z_2^{l_2}\cdots z_n^{l_n},
\]
and
\[
f:= T_\vp (z_1^{k_0} Z_{kl}) - z_1 T_\vp(z_1^{k_0-1}Z_{kl}).
\]
Note that $f$ is a nonzero function in $H^2(\D^n)$, and $f$ does not depend on $z_1$. Since $T_\vp(z_1^{k_0-1}Z_{kl}) \in \clr(T_\vp)$, Lemma \ref{lemma-invariant} implies that $f \in \clr(T_\vp)$. In particular, by \eqref{eqn: phi f in H2}, $\bar{\vp} f \in H^2(\D^n)$. On the other hand, since
\[
\bar{\vp} f = \sum_{k=1}^{\infty} {z}_1^k (f\bar{\vp}_{-k}) \oplus \sum_{k=0}^{\infty} \bar{z}_1^k (f\bar{\vp}_k),
\]
it follows that $f \bar{\vp}_k = 0$ for all $k > 0$. Since $\bm{m}(\{z \in \mathbb{T}^n: f(z) = 0\}) =0$, we have $\bar{\vp}_k = 0$ for all $k > 0$. This yields
\[
\vp = \sum_{k=0}^{\infty} \bar{z}_1^k \vp_{-k},
\]
and hence $\vp$ depends on $\bar{z}_1$ and does not depend on $z_1$. This is a contradiction.
\end{proof}

We are now ready for the proof of Theorem \ref{thm- main}.

\smallskip

\begin{proof}[Proof of Theorem \ref{thm- main}]
Suppose $T_\vp$ is a partial isometry. In view of Lemma \ref{prop - main}, there exists a (possibly empty) subset $C$ of $\{z_1, \ldots, z_n\}$ such that $\vp$ is analytic in $z_i$ for all $z_i \in A :=C^c$, and co-analytic in $z_j$ for all $z_j \in C$. Let $A = \{z_{i_1}, \ldots, z_{i_p}\}$ and $C = \{z_{j_1}, \ldots, z_{j_q}\}$. Then $p+q=n$, and
\[
\vp = \sum_{k \in \Z_+^{q}} \bar{z}_C^k \vp_{A,k},
\]
where $\vp_{A,k} \in H^2(\D^p)$ is a function of $\{z_{i_1}, \ldots, z_{i_p}\}$, $\bar{z}_C^k = \bar{z}_{j_1}^{k_1} \cdots \bar{z}_{j_q}^{k_q}$, and $k = (k_1, \ldots, k_q)\in \Z_+^{q}$. Note that
\[
\vp_{A,l} \in \clr(T_\vp) \qquad (l \in \Z_+^{q}).
\]
Indeed, $\vp_{A,0} = T_\vp 1 \in \clr(T_\vp)$. Moreover, for each $l \in \Z_+^{q} \setminus \{0\}$, we have
\[
T_\vp z^l = P_{H^2(\D^n)} \Big(\sum_{k \in \Z_+^{q}} {z}_C^{l-k} \vp_{A,k}\Big),
\]
that is
\[
T_\vp z^l = \sum_{l - k \geq 0} {z}_C^{l-k} \vp_{A,k}.
\]
Here $l-k \geq 0$ means that $l_i - k_i \geq 0$ for all $i=1, \ldots, q$. Thus the claim follows by induction. By  \eqref{eqn: phi f in H2}, we have $\bar{\vp} \vp_{A,l} \in H^2(\D^n)$, $l \in \Z_+^{q}$. Therefore
\[
\bar{\vp} \vp_{A,l} = \sum_{k \in \Z_+^{q}} {z}_C^k \overline{\vp_{A,k}}\vp_{A,l} \in H^2(\D^n) \qquad (l \in \Z_+^{q}).
\]
Consequently, $\overline{\vp}_{A,k} \vp_{A,l} \in H^2(\D^p)$ for all $k$ and $l$, and hence, in particular, we have
\[
\overline{\vp}_{A,l}\vp_{A,l} \in H^2(\D^p) \qquad (l \in \Z^q_+).
\]
This immediately implies that $\overline{\vp}_{A,l}\vp_{A,l}$ is a constant function, and hence $\vp_{A,l} = \alpha_l {\psi_l}$ for some inner function $\psi_l \in H^\infty(\D^p)$ and scalar $\alpha_l$ such that $|\alpha_l| \leq 1$, $l \in \Z^q_+$. Assume without loss of generality that $\vp_{A,0} \neq 0$. Now by the fact that $\overline{\vp}_{A,0}\vp_{A,k}$ and $\overline{\vp}_{A,k}\vp_{A,0}$ are in $H^2(\D^p)$, we have $\vp_{A,k} = \beta_k \psi_0$, $k \in \Z^q_+$. Therefore
\[
\vp = \Big(\sum_{k \in \Z_+^{q}} \beta_k \bar{z}_C^k\Big) \psi_0 = \bar{\vp}_1 \vp_2,
\]
where $\vp_1 = \sum_{k \in \Z_+^{q}} \bar{\beta}_k {z}_C^k$ and $\vp_2 = \psi_0$.

We now turn to the converse part. First we have clearly
\begin{equation}\label{eqn: commutes}
T_{\vp_1} T_{\vp_2} = T_{\vp_2} T_{\vp_1}.
\end{equation}
We also claim that
\begin{equation}\label{eqn: 12 equal}
T_{\vp_1} T_{\vp_2}^* = T_{\vp_2}^* T_{\vp_1}.
\end{equation}
This holds trivially when one of the functions $\vp_1$ or $\vp_2$ is constant. We continue with the above notation, and assume that both $A$ and $C$ are nonempty subsets of $\{z_1, \ldots, z_n\}$. First we observe that $\vp_1$ and $\vp_2$ depends only on $\{z_{i_1}, \ldots, z_{i_p}\}$ and $\{z_{j_1}, \ldots, z_{j_q}\}$, respectively. Consider a monomial $z^k \in \mathbb{C}[z_1, \ldots, z_n]$. Suppose $k = (k_1, \ldots, k_n)$, and write
\[
z^k = z_C^{k_c} z_A^{k_a},
\]
where $k_c = (k_{j_1}, \ldots, k_{j_q}) \in \Z_+^q$, and $k_a \in \Z_+^{p}$ is the ordered $p$ tuple made out of $\{k_i\}_{i=1}^n \setminus \{k_{j_t}\}_{t=1}^q$. Since the analytic function $\vp_2$ depends only on $z_{j_s} \in C$, $s=1, \ldots, p$, it is clear that
\[
\bar{\vp}_2 z_C^{k_c} = \vp_a + \vp_c,
\]
where $\vp_a$ depends only on $\{z_{j_s}\}_{s=1}^p$ (and hence it is an analytic function) and $\vp_c \in L^2(\T^q) \ominus H^2(\D^q)$ is a function of $\{{z}_{j_t}, \bar{z}_{j_t}\}_{t=1}^q$. Note that the latter property ensures that $\vp_c(0) = 0$. Then, on one hand, we have
\[
T_{\vp_2}^* T_{\vp_1} z^k = P_{H^2(\D^n)} (\bar{\vp}_2 \vp_1 z^k) = P_{H^2(\D^n)} \Big( (\vp_a + \vp_c) \vp_1 z_A^{k_a}\Big) = \vp_a \vp_1 z_A^{k_a},
\]
and on the other hand that
\[
T_{\vp_1} T_{\vp_2}^* z^k = \vp_1 P_{H^2(\D^n)} (\bar{\vp}_2 z^k) = \vp_1 P_{H^2(\D^n)} \Big( (\vp_a + \vp_c) z_A^{k_a}\Big) = \vp_1 \vp_a z_A^{k_a}.
\]
Consequently, $T_{\vp_2}^* T_{\vp_1} z^k = T_{\vp_1} T_{\vp_2}^* z^k$ for all $k \in \Z_+^n$, which proves our claim. Now suppose that $T_\vp = T_{\vp_1}^* T_{\vp_2}$, where $\vp_1$ and $\vp_2$ depends on different variables. Using \eqref{eqn: commutes} and \eqref{eqn: 12 equal}, we obtain
\begin{equation}\label{eqn: vp vp*}
T_\vp T_\vp^* = T_{\vp_1}^* T_{\vp_2} T_{\vp_2}^* T_{\vp_1} = (T_{\vp_1}^* T_{\vp_1}) (T_{\vp_2}T_{\vp_2}^*) = P_{\clr(T_{\vp_2})},
\end{equation}
which implies that $T_\vp$ is a partial isometry.
\end{proof}

We remark that the commutativity and doubly commutativity of $T_{\vp_1}$ and $T_{\vp_2}$ in \eqref{eqn: commutes} and \eqref{eqn: 12 equal} will be useful in the particular applications to Theorem \ref{thm- main} in the final section.

\section{Inner functions and shifts}\label{sec: inner function}

In this short section, we pause to prove an auxiliary result that is both a necessary tool for our final refinement of partial isometric Toeplitz operators and a subject of independent interest with its own applications.

Let $\vp \in H^\infty(\D^n)$, and suppose the multiplication operator $M_\vp$ is an isometry on $H^2(\D^n)$. Then
\[
\|\vp\|_{\infty} = \|M_\vp\|_{\clb(H^2(\D^n))} = 1,
\]
and hence Corollary \ref{lemma: norm attain} implies that $\vp$ is a unimodular function in $H^\infty(\D^n)$, that is, $\vp$ is an inner function. Now we prove that a nonconstant inner function always defines a shift (and not only isometry). Recall that an operator $V \in \clb(\clh)$ is said to be a \textit{shift} if $V$ is an isometry and $V^{*m} \raro 0$ as $m \raro \infty$ in the strong operator topology.

Recall that a closed subspace $\cls \subseteq H^2(\D^n)$ is of \textit{Beurling type} if there exists an inner function $\theta \in H^\infty(\D^n)$ such that $\cls = \theta H^2(\D^n)$. It is also known that (cf. \cite[Corollary 6.3]{MMSS} and \cite{Lu}) a closed subspace $\cls \subseteq H^2(\D^n)$, $n > 1$, is of Beurling type if and only if $R_i^* R_j = R_j R^*_i$ for all $1 \leq i < j \leq n$, where $R_p = M_{z_p}|_{\cls} \in \clb(\cls)$ is the restriction operator and $p=1, \ldots, n$. Note that
\begin{equation}\label{eqn: dc equality}
R_i^* R_j = P_{\cls} M_{z_i}^* M_{z_j}|_{\cls} \mbox{~~and~~} R_j R_i^* = M_{z_j} P_{\cls} M_{z_i}^*|_{\cls},
\end{equation}
for all $i,j=1,\ldots, n$.

\begin{theorem}\label{thm: phi is shift}
If $\vp \in H^\infty(\D^n)$ is a nonconstant inner function, then $M_\vp$ is a shift.
\end{theorem}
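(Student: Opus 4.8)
The plan is to verify the two defining properties of a shift separately. That $M_\vp$ is an isometry has already been recorded in the discussion preceding the theorem, and it follows at once from $\vp$ being inner: since $|\vp| = 1$ a.e.\ on $\T^n$, we have $\|M_\vp f\|^2 = \int_{\T^n} |\vp|^2 |f|^2 \, d\bm{m} = \|f\|^2$ for all $f \in H^2(\D^n)$. It remains to prove that $M_\vp^{*m} \raro 0$ in the strong operator topology, and I would first reduce this to a statement about the nest of ranges. Because $M_\vp$ is an isometry, $M_\vp^m M_\vp^{*m} = P_{\vp^m H^2(\D^n)}$, so $\|M_\vp^{*m} f\|^2 = \|P_{\vp^m H^2(\D^n)} f\|^2$ for every $f$. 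As the subspaces $\vp^m H^2(\D^n)$ decrease with $m$, the projections converge strongly to $P_{\clm}$, where $\clm = \bigcap_{m \geq 0} \vp^m H^2(\D^n)$. Hence $M_\vp^{*m} \raro 0$ strongly if and only if $\clm = \{0\}$, and this is the statement I would actually prove.

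To show $\clm = \{0\}$, the key geometric input is that a nonconstant inner function is a strict contraction inside the polydisc. Indeed, $\|\vp\|_\infty = 1$ gives $|\vp(z)| \leq 1$ on $\D^n$, and if $|\vp(z_0)| = 1$ at an interior point $z_0$, the maximum modulus principle for holomorphic functions of several variables would force $\vp$ to be constant; since $\vp$ is nonconstant, $|\vp(z)| < 1$ for every $z \in \D^n$. Now suppose $g \in \clm$. For each $m$ we may write $g = \vp^m h_m$ with $h_m \in H^2(\D^n)$, and since $M_\vp^m$ is an isometry, $\|h_m\| = \|g\|$ for all $m$. Evaluating at a fixed $z \in \D^n$ and using the pointwise bound $|h_m(z)| \leq \|h_m\| \, K(z,z)^{1/2}$ coming from the reproducing (Szeg\H{o}) kernel $K$ of $H^2(\D^n)$, I obtain
\[
|g(z)| = |\vp(z)|^m \, |h_m(z)| \leq |\vp(z)|^m \, \|g\| \, K(z,z)^{1/2}.
\]
Letting $m \raro \infty$ and using $|\vp(z)| < 1$ forces $g(z) = 0$ for every $z \in \D^n$, so $g \equiv 0$ and $\clm = \{0\}$.

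I do not expect a serious obstacle once these two ingredients are in hand; the only points needing care are the monotone convergence of the projections $P_{\vp^m H^2(\D^n)}$ to $P_{\clm}$ along the decreasing nest, and the several-variable maximum modulus principle, which is precisely what rules out a nontrivial unitary (Wold) part of $M_\vp$. It is worth emphasizing that this argument sidesteps any Beurling-type classification of invariant subspaces: although such a classification fails for $H^2(\D^n)$ when $n > 1$, the single subspace $\vp H^2(\D^n)$ is trivially of Beurling type, and the doubly commuting identities recorded in \eqref{eqn: dc equality} remain available should one prefer to package the purity of $M_\vp$ through the structure of its wandering subspace $H^2(\D^n) \ominus \vp H^2(\D^n)$ instead.
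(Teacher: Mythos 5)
Your proposal is correct, and it takes a genuinely different route from the paper. Both proofs begin identically: isometry from $|\vp|=1$ a.e.\ on $\T^n$, and the Wold-type reduction to showing $\bigcap_{m \geq 0} \vp^m H^2(\D^n) = \{0\}$. Where you diverge is in how the intersection is killed. The paper first proves that this intersection, if nonzero, is a Beurling-type invariant subspace: it takes the SOT limit of the projections $P_{\vp^m H^2(\D^n)}$, passes the doubly commuting identities of \eqref{eqn: dc equality} through the limit to verify $R_i^* R_j = R_j R_i^*$ on the intersection, and invokes the characterization of Beurling-type subspaces from \cite{MMSS} and \cite{Lu} to write the intersection as $\theta H^2(\D^n)$ with $\theta$ inner; only then does it apply the maximum modulus principle to the factorizations $\theta = \vp^m \psi_m$ to force $\theta \equiv 0$. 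You instead apply the Szeg\H{o}-kernel estimate $|h_m(z)| \leq \|h_m\| K(z,z)^{1/2}$ directly to an arbitrary $g = \vp^m h_m$ in the intersection, with $\|h_m\| = \|g\|$ by isometry, and conclude $g(z) = 0$ pointwise from $|\vp(z)|<1$. This is precisely the alternative the authors gesture at when they remark that the theorem ``can be proved by using (analytic) reproducing kernel Hilbert space techniques'' without carrying it out. Your argument is shorter and more elementary --- it needs no invariant-subspace classification, only the reproducing property and the several-variable maximum modulus principle, and it would transfer to other analytic RKHS settings where the multiplier acts isometrically. What the paper's longer route buys is the by-product recorded immediately after its proof: the same SOT-limit argument shows that an arbitrary countable intersection of Beurling-type invariant subspaces of $H^2(\D^n)$ is again of Beurling type, a fact of independent interest that your proof does not yield. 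All the individual steps you flag --- $M_\vp^m M_\vp^{*m} = P_{\vp^m H^2(\D^n)}$, the strong convergence of projections along a decreasing nest, and the strict contractivity $|\vp(z)|<1$ on $\D^n$ for nonconstant inner $\vp$ --- are sound as stated.
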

\begin{proof}
It is well known (as well as easy to see) that $M_\vp$ is an isometry. Following the classical von Neumann and Wold decomposition for isometries, we only need to prove that
\[
\clh_u:= \bigcap_{m=0}^\infty \vp^m H^2(\D^n) = \{0\}.
\]
Assuming the contrary, suppose that $\clh_u \neq \{0\}$. We claim that $\clh_u$ is of Beurling type. Since the $n=1$ case is obvious, we assume that $n > 1$. As $\vp^p H^2(\D^n) \subseteq \vp^q H^2(\D^n)$ for all $p \geq q$, we have
\[
P_{\clh_u} = SOT-\lim_{m\raro \infty} P_{\vp^m H^2(\D^n)}.
\]
Since $\vp^m H^2(\D^n)$, $m \geq 1$, is a Beurling type invariant subspace, in view of \eqref{eqn: dc equality}, it follows that
\[
P_{\clh_u} M_{z_i}^* M_{z_j} h = M_{z_j} P_{\clh_u} M_{z_i}^*h,
\]
for all $h \in \clh_u$. Then \eqref{eqn: dc equality} again implies that $\clh_u$ is of Beurling type. Therefore, there exists an inner function $\theta \in H^\infty(\D^n)$ such that $\clh_u = \theta H^2(\D^n)$ (note that the $n=1$ case directly follows from Beurling). Then, for each $m \geq 1$, there exists an inner function $\psi_m \in H^\infty(\D^n)$ such that $\theta = \vp^m \psi_m$ (for instance, see \eqref{eqn:phi X commute}). Since $\vp$ is a nonconstant inner function, by the maximum modulus principle \cite[\S 2, Theorem 6]{Shabat}, we have $|\vp(z)| < 1$ for all $z \in \D^n$. For each fixed $z_0 \in \D^n$, it follows that
\[
|\theta(z_0)| = |\vp(z_0)|^m |\psi_m(z_0)| \leq |\vp(z_0)|^m \raro 0 \mbox{~as~} m\raro \infty,
\]
and hence $\theta \equiv 0$. This contradiction shows that $\clh_u = \{0\}$.
\end{proof}

In fact, the above argument yields something more: Suppose $\{\cls_m\}_{m\geq 1}$ be a sequence of Beurling type invariant subspaces of $H^2(\D^n)$. Then $\bigcap_{m=1}^\infty \cls_m$ is also a Beurling type invariant subspace. Indeed, we let $\clh_m = \bigcap_{i=1}^m \cls_m$. Then $\{\clh_m\}_{m\geq 1}$ forms a decreasing sequence of Beurling type invariant subspaces, and hence
\[
P_{\bigcap_{m=1}^\infty \cls_m} = P_{\bigcap_{m=1}^\infty \clh_m} = SOT-\lim_{m\raro \infty} P_{\clh_m}.
\]
The rest of the proof is then much as before.

We also wish to point out that Theorem \ref{thm: phi is shift} can be proved by using (analytic) reproducing kernel Hilbert space techniques. We believe that the algebraic tools described above might be useful in other settings.

\section{Applications and further refinements}\label{sec: hypo}

We begin with partially isometric Toeplitz operators that are hyponormal. A bounded linear operator $T$ acting on a Hilbert space is called \textit{hyponormal} if $[T^*, T] \geq 0$, where
\[
[T^*, T] = T^* T - T T^*,
\]
is the self commutator of $T$.

Now suppose $T_\vp$, $\vp \in L^\infty(\T^n)$, is a partial isometry. If $\vp \in H^\infty(\D^n)$ is inner, then $T_\vp$ is an isometry and hence is hyponormal. For the converse direction, we note by Theorem \ref{thm- main} that $T_\vp = T_{\vp_1}^* T_{\vp_2}$ for some inner functions $\vp_1$ and $\vp_2$ in $H^\infty(\D^n)$ which depends on different variables. If $\vp_1$ is a constant function, then $T_\vp = T_{\vp_2} = M_{\vp_2}$ is an isometry, and hence $T_\vp$ is hyponormal. If $\vp_2$ is a constant function, then $T_\vp = T_{\vp_1}^* = M_{\vp_1}^*$ is a co-isometry, and hence $T_\vp$ cannot be hyponormal. Suppose both $\vp_1$ and $\vp_2$ are nonconstant functions. Now \eqref{eqn: commutes} and \eqref{eqn: 12 equal} imply that
\[
T_\vp^* T_\vp = T_{\vp_2}^* T_{\vp_1} T_{\vp_1}^* T_{\vp_2} = (T_{\vp_2}^* T_{\vp_2}) (T_{\vp_1}T_{\vp_1}^*) = T_{\vp_1}T_{\vp_1}^*.
\]
Then, by \eqref{eqn: vp vp*} we see that $[T_{\vp}^*, T_\vp] \geq 0$ implies $T_{\vp_2} T_{\vp_2}^* \leq T_{\vp_1}T_{\vp_1}^*$. By noting that $\vp_1$ and $\vp_2$ are analytic functions, we see
\[
M_{\vp_2} M_{\vp_2}^* \leq M_{\vp_1} M_{\vp_1}^*,
\]
which, by the Douglas range inclusion theorem, is equivalent to $M_{\vp_2} = M_{\vp_1} X$ for some $X \in \clb(H^2(\D^n))$. Observe that
\begin{equation}\label{eqn:phi X commute}
M_{\vp_1} M_{z_i} X = M_{z_i} M_{\vp_1} X = M_{z_i} M_{\vp_2} = M_{\vp_2} M_{z_i} = M_{\vp_1} X M_{z_i},
\end{equation}
implies that $M_{z_i} X = X M_{z_i}$ for all $i=1, \ldots, n$, and hence $X = M_{\psi}$ for some $\psi \in H^\infty(\D^n)$. Hence, we conclude that $\vp_2 = \vp_1 \psi$. Since $\vp_1$ and $\vp_2$ are inner functions, $\psi \in H^\infty(\D^n)$ is inner. Moreover, since $\vp_1$ and $\vp_2$ depends on different variables, that $\vp_2 = \vp_1 \psi$ is possible if and only if $\psi$ is a unimodular constant. Suppose $\vp_2 = \alpha \vp_1$, where $|\alpha| = 1$. Then
\[
T_\vp = T_{\vp_1}^* T_{\vp_2} = \alpha T_{\vp_1}^* T_{\vp_1} = \alpha T_{|\vp_1|^2} = \alpha I_{H^2(\D^n)},
\]
as $|\vp_1|^2 = 1$ on $\T^n$, that is, $T_\vp$ is a unimodular constant times the identity operator. We have therefore shown the following result:

\begin{corollary}\label{coro-hypo}
Let $T_\vp$, $\vp \in L^\infty(\T^n)$, be a partial isometry. Then $T_\vp$ is hyponormal if and only if $\vp$ is an inner function in $H^\infty(\D^n)$.
\end{corollary}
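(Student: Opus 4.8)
The plan is to treat the two implications separately, with the forward direction immediate and the converse resting on the factorization of Theorem~\ref{thm- main} together with the (double) commutation relations established in Section~\ref{sec:main}. For the forward direction, if $\vp$ is inner then $T_\vp = M_\vp$, which by Theorem~\ref{thm: phi is shift} is in fact a shift; in particular $T_\vp^* T_\vp = I$ while $T_\vp T_\vp^* \leq I$, so $[T_\vp^*, T_\vp] \geq 0$ and $T_\vp$ is hyponormal. The content therefore lies entirely in the converse.

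For the converse I would start from Theorem~\ref{thm- main}, writing $T_\vp = T_{\vp_1}^* T_{\vp_2}$ with $\vp_1, \vp_2$ inner and depending on disjoint sets of variables, and then split into cases according to which of $\vp_1, \vp_2$ is constant. If $\vp_1$ is constant, then $T_\vp = M_{\vp_2}$ is an isometry with inner symbol and there is nothing left to prove. If $\vp_2$ is constant, then $T_\vp = M_{\vp_1}^*$ is a co-isometry; a co-isometry is hyponormal only when it is normal, that is, only when $M_{\vp_1}$ is unitary, which forces $\vp_1$ to be a unimodular constant and again places us in the inner case. The genuine case is when both $\vp_1$ and $\vp_2$ are nonconstant.

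In that case the key computation uses \eqref{eqn: commutes} and \eqref{eqn: 12 equal} to rearrange $T_\vp^* T_\vp = T_{\vp_2}^* T_{\vp_1} T_{\vp_1}^* T_{\vp_2} = (T_{\vp_2}^* T_{\vp_2})(T_{\vp_1} T_{\vp_1}^*) = T_{\vp_1} T_{\vp_1}^*$, while \eqref{eqn: vp vp*} records $T_\vp T_\vp^* = T_{\vp_2} T_{\vp_2}^*$. Hyponormality then reads $M_{\vp_2} M_{\vp_2}^* \leq M_{\vp_1} M_{\vp_1}^*$, and the Douglas range inclusion theorem supplies a contraction $X$ with $M_{\vp_2} = M_{\vp_1} X$. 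A short commutation argument of the type displayed in \eqref{eqn:phi X commute}, multiplying through by $M_{z_i}$ and cancelling the injective operator $M_{\vp_1}$ (injective since $\vp_1$ is a nonzero inner function, by Lemma~\ref{lemma: f = 0}), yields $X M_{z_i} = M_{z_i} X$ for every $i$, whence $X = M_\psi$ for some inner $\psi \in H^\infty(\D^n)$ and $\vp_2 = \vp_1 \psi$.

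The crux, and the step I expect to be the main obstacle, is the final rigidity extracted from $\vp_2 = \vp_1 \psi$. This is precisely where the hypothesis of disjoint variable dependence becomes indispensable: on $\T^n$ one has $\psi = \bar{\vp}_1 \vp_2$, a function that is holomorphic in all variables yet assembles the antiholomorphic factor $\bar{\vp}_1$ in the $A$-variables with the holomorphic factor $\vp_2$ in the disjoint $C$-variables. Since $\vp_2$ carries no $A$-dependence, holomorphy of $\psi$ in the $A$-variables forces $\bar{\vp}_1$ to be constant there, so $\psi$ must be a unimodular constant $\alpha$ and $\vp_2 = \alpha \vp_1$; as $\vp_1$ and $\vp_2$ depend on disjoint variables, both are then necessarily constant and $T_\vp = \alpha I_{H^2(\D^n)}$, whose symbol is the inner (constant unimodular) function $\alpha$. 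Making this separation-of-variables argument precise is the delicate point; once it is in place, every case terminates with $\vp$ inner, completing the proof.
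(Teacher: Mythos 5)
Your proposal is correct and takes essentially the same route as the paper's own proof: the factorization $T_\vp = T_{\vp_1}^* T_{\vp_2}$ from Theorem \ref{thm- main}, the identities \eqref{eqn: commutes}, \eqref{eqn: 12 equal}, and \eqref{eqn: vp vp*} yielding $T_\vp^* T_\vp = T_{\vp_1} T_{\vp_1}^*$ and $T_\vp T_\vp^* = T_{\vp_2} T_{\vp_2}^*$, then the Douglas range inclusion theorem together with the commutation argument \eqref{eqn:phi X commute} to obtain $\vp_2 = \vp_1 \psi$ with $\psi$ inner, and finally the disjoint-variable rigidity collapsing the nonconstant case. If anything, your handling of the co-isometry case (a hyponormal co-isometry must be unitary, forcing $\vp_1$ constant) is slightly more careful than the paper's bare assertion that a co-isometry cannot be hyponormal.
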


Therefore, in view of Theorem \ref{thm: phi is shift}, $T_\vp$ is hyponormal if and only if (up to unitary equivalence) $T_\vp$ is a shift.

We recall \cite[Halmos and Wallen]{HW} that a bounded linear operator $T$ acting on some Hilbert space is called a \textit{power partial isometry} if $T^m$ is partially isometric for all $m \geq 1$. Clearly, Theorem \ref{thm- main} and the equalities in \eqref{eqn: commutes} and \eqref{eqn: 12 equal} imply the following statement:

\begin{corollary}\label{coro-power partial}
Partially isometric Toeplitz operators are power partial isometry.
\end{corollary}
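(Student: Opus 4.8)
The plan is to reduce the power partial isometry condition to a repeated application of Theorem \ref{thm- main}. By Theorem \ref{thm- main} I may write $T_\vp = T_{\vp_1}^* T_{\vp_2}$, where $\vp_1, \vp_2 \in H^\infty(\D^n)$ are inner functions depending on different variables. The key structural input is the doubly commuting relation \eqref{eqn: 12 equal}, namely $T_{\vp_1} T_{\vp_2}^* = T_{\vp_2}^* T_{\vp_1}$; taking adjoints of this gives $T_{\vp_1}^* T_{\vp_2} = T_{\vp_2} T_{\vp_1}^*$, so that $T_{\vp_1}^*$ and $T_{\vp_2}$ commute.

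First I would use this commutativity to compute the powers of $T_\vp$. Since $T_{\vp_1}^*$ and $T_{\vp_2}$ commute, for every $m \geq 1$ I may collect the $2m$ factors of $(T_{\vp_1}^* T_{\vp_2})^m$ to obtain
\[
T_\vp^m = (T_{\vp_1}^*)^m (T_{\vp_2})^m.
\]
Because $\vp_1$ and $\vp_2$ are inner, $T_{\vp_1} = M_{\vp_1}$ and $T_{\vp_2} = M_{\vp_2}$ are the corresponding multiplication operators, so $(T_{\vp_1}^*)^m = M_{\vp_1^m}^* = T_{\vp_1^m}^*$ and $(T_{\vp_2})^m = M_{\vp_2^m} = T_{\vp_2^m}$. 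Hence
\[
T_\vp^m = T_{\vp_1^m}^* T_{\vp_2^m}.
\]

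Next I would observe that $\vp_1^m$ and $\vp_2^m$ are again inner functions in $H^\infty(\D^n)$, and that they continue to depend on different variables since $\vp_1$ and $\vp_2$ do. Therefore the converse direction of Theorem \ref{thm- main} applies verbatim to the pair $(\vp_1^m, \vp_2^m)$: the relations \eqref{eqn: commutes} and \eqref{eqn: 12 equal} hold for $T_{\vp_1^m}$ and $T_{\vp_2^m}$ (their proof used only innerness and the disjointness of the variable sets), and the computation \eqref{eqn: vp vp*} then yields $T_\vp^m (T_\vp^m)^* = P_{\clr(T_{\vp_2^m})}$, an orthogonal projection. Thus $T_\vp^m$ is a partial isometry for every $m \geq 1$, which is precisely the assertion that $T_\vp$ is a power partial isometry.

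I do not anticipate a serious obstacle, since everything is driven by the two commutation identities already established. The only points requiring a little care are the legitimacy of collecting the $T_{\vp_1}^*$ and $T_{\vp_2}$ factors when forming $T_\vp^m$ (immediate once the commutativity of $T_{\vp_1}^*$ and $T_{\vp_2}$ is in hand) and the routine verification that passing to $m$-th powers preserves both innerness and the different variables condition.
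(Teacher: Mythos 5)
Your proof is correct and is exactly the argument the paper intends: the paper's one-line justification (``Theorem \ref{thm- main} and the equalities in \eqref{eqn: commutes} and \eqref{eqn: 12 equal} imply\ldots'') is precisely your computation $T_\vp^m = T_{\vp_1^m}^* T_{\vp_2^m}$ followed by the converse direction of Theorem \ref{thm- main} applied to the still-inner, still variable-disjoint pair $(\vp_1^m, \vp_2^m)$. You have simply made explicit the details the paper leaves to the reader, including the two points of care you flag, both of which check out.
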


We also recall from Halmos and Wallen \cite{HW} (also see \cite{Raeburn}) that every power partial isometry is a direct sum whose summands are unitary operators, shifts, co-shifts, and truncated shifts. Recall that a \textit{truncated shift} $S$ of \textit{index} $p$, $p \in \mathbb{N}$, on some Hilbert space $\clh$ is an operator of the form
\[
S= \begin{bmatrix}
0 & 0 & 0 & \cdots  & 0& 0
\\
I_{\clh_0} & 0 & 0 & \cdots & 0& 0
\\
0 & I_{\clh_0} & 0 & \cdots & 0& 0
\\
\vdots & \vdots & \vdots & \cdots & 0 & 0
\\
0 & 0 & 0 & \cdots & I_{\clh_0} & 0
\end{bmatrix}_{p \times p},
\]
where $\clh_0$ is a Hilbert space, and $\clh = \underbrace{\clh_0 \oplus \cdots \oplus \clh_0}_{p}$.

We prove that, up to unitary equivalence, a partial isometric $T_\vp$ is simply direct sum of truncated shifts, or a shift, or a co-shift (that is, adjoint of a shift). The proof is essentially contained in Theorem \ref{thm: phi is shift} and the Halmos and Wallen models of power partial isometries.

\begin{theorem}\label{thm: shift, co-shift, trunc}
Up to unitary equivalence, a partially isometric Toeplitz operator is either a shift, or a co-shift, or a direct sum of truncated shifts.
\end{theorem}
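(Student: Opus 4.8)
The plan is to combine the Halmos and Wallen structure theorem for power partial isometries with the doubly commuting factorization supplied by Theorem \ref{thm- main}. By Corollary \ref{coro-power partial}, $T_\vp$ is a power partial isometry, so the Halmos and Wallen model \cite{HW} furnishes an orthogonal decomposition of $H^2(\D^n)$ reducing $T_\vp$ into a unitary part, a shift part, a co-shift part, and a part that is a direct sum of truncated shifts. The entire task is then to determine which of these four summands actually occur. I would detect them through the two sequences of projections $\{T_\vp^{*m} T_\vp^m\}_{m \geq 1}$ (the initial projections) and $\{T_\vp^m T_\vp^{*m}\}_{m \geq 1}$ (the final projections): in the Halmos and Wallen model the unitary summand keeps both sequences equal to the identity, the shift keeps the initial projections equal to $I$ while the final projections tend to $0$, the co-shift does the reverse, and each truncated shift sends both sequences to $0$ (indeed to $0$ in finitely many steps).

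The key computation is to evaluate these projections explicitly. Writing $T_\vp = M_{\vp_1}^* M_{\vp_2}$ as in Theorem \ref{thm- main}, the commutativity and double commutativity relations \eqref{eqn: commutes} and \eqref{eqn: 12 equal} give $T_\vp^m = M_{\vp_1}^{*m} M_{\vp_2}^m$; then, moving $M_{\vp_2}^{*m}$ past the final projection $M_{\vp_1}^m M_{\vp_1}^{*m} = P_{\vp_1^m H^2(\D^n)}$ (with which it commutes, again by \eqref{eqn: 12 equal}) and using that $M_{\vp_2}$ is an isometry, I expect
\[
T_\vp^{*m} T_\vp^m = P_{\vp_1^m H^2(\D^n)} \quad \text{and} \quad T_\vp^m T_\vp^{*m} = P_{\vp_2^m H^2(\D^n)},
\]
the second identity being the $m$-th power analogue of \eqref{eqn: vp vp*}. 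Since $\vp_i^{m+1} H^2(\D^n) \subseteq \vp_i^m H^2(\D^n)$, both sequences are decreasing.

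With these identities in hand the conclusion follows by a short case analysis on whether $\vp_1$ and $\vp_2$ are constant. If $\vp_i$ is a nonconstant inner function, then $M_{\vp_i}$ is a shift by Theorem \ref{thm: phi is shift}, so $\bigcap_{m \geq 1} \vp_i^m H^2(\D^n) = \{0\}$, and hence $P_{\vp_i^m H^2(\D^n)} \raro 0$ in the strong operator topology; whereas if $\vp_i$ is a unimodular constant, then $P_{\vp_i^m H^2(\D^n)} = I$ for every $m$. Thus, when both $\vp_1$ and $\vp_2$ are nonconstant, both projection sequences tend to $0$, which annihilates the unitary, shift, and co-shift summands and leaves $T_\vp$ a direct sum of truncated shifts. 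When only $\vp_1$ is constant the initial projections remain $I$ while the final ones vanish, matching the shift summand (consistently, $T_\vp$ is then a unimodular multiple of the shift $M_{\vp_2}$); the case in which only $\vp_2$ is constant is symmetric and yields a co-shift.

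The step requiring the most care is the matching with the Halmos and Wallen summands: one must verify that $\mathrm{SOT}\text{-}\lim_m T_\vp^{*m} T_\vp^m = 0 = \mathrm{SOT}\text{-}\lim_m T_\vp^m T_\vp^{*m}$ genuinely forces the \emph{absence} of the unitary, shift, and co-shift parts, rather than merely their ``smallness,'' so that only truncated shifts can remain. This is where the clean correspondence between the four model types and the limiting behaviour of the initial and final projections does the real work. I would also record the degenerate subcase in which both $\vp_1$ and $\vp_2$ are constant, giving $T_\vp = \gamma I_{H^2(\D^n)}$ with $|\gamma| = 1$; this unitary falls outside the three listed types and is to be set aside by restricting attention to nonconstant symbols.
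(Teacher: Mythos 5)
Your proposal is correct and follows essentially the same route as the paper's proof: invoke Corollary \ref{coro-power partial} and the Halmos--Wallen model, use \eqref{eqn: commutes} and \eqref{eqn: 12 equal} to identify the initial and final projections $T_\vp^{*m}T_\vp^m = P_{\vp_1^m H^2(\D^n)}$ and $T_\vp^m T_\vp^{*m} = P_{\vp_2^m H^2(\D^n)}$, and then apply Theorem \ref{thm: phi is shift} to conclude that both intersections $\bigcap_m \vp_i^m H^2(\D^n)$ vanish, killing the unitary, shift, and co-shift summands when both $\vp_1, \vp_2$ are nonconstant. Your explicit handling of the degenerate case where both $\vp_1$ and $\vp_2$ are constant (so that $T_\vp = \gamma I$ is unitary and falls outside the three listed types) is a small point the paper's case analysis passes over silently, and it is worth recording as you do.
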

\begin{proof}
Suppose $T_\vp$, $\vp \in L^\infty(\T^n)$, is a partial isometry. By Theorem \ref{thm- main}, $T_\vp = T_{\vp_1}^* T_{\vp_2}$, where $\vp_1$ and $\vp_2$ are inner functions in $H^\infty(\D^n)$ and depends on different variables. Moreover, by Corollary \ref{coro-power partial}, $T_\vp$ is a power partial isometry. If $\vp_1$ is a constant function, then $T_\vp$ is a shift, and if $\vp_2$ is a constant function, then $T_\vp$ is a co-shift. Now let both $\vp_1$ and $\vp_2$ are nonconstant functions. Following the construction of Halmos and Wallen \cite[page 660]{HW} (also see \cite{Raeburn}), we set $E_m = T_{\vp}^{*m} T_{\vp}^m$ and $F_m = T_{\vp}^m T_{\vp}^{*m}$ for the initial and final projections of the partial isometry $T_{\vp}^m$, $m \geq 1$. By \eqref{eqn: commutes} and \eqref{eqn: 12 equal} it follows that $E_m = T_{\vp_1}^{m} T_{\vp_1}^{*m}$ and $F_m = T_{\vp_2}^m T_{\vp_2}^{*m}$, and hence
\[
\clr(E_m) = \vp_1^m H^2(\D^n) \mbox{~~and~~} \clr(F_m) = \vp_2^m H^2(\D^n),
\]
for all $m \geq 1$. Then, by Theorem \ref{thm: phi is shift}, we have
\[
\bigcap_{m\geq 0} \clr(E_m) = \bigcap_{m\geq 0} \vp_1^m H^2(\D^n) = \{0\},
\]
and similarly $\bigcap\limits_{m\geq 0} \clr(F_m) = \{0\}$. Therefore, the unitary part, the shift part, and the co-shift part of the Halmos and Wallen model of $T_\vp$ are trivial (see \cite[page 661]{HW} or \cite{Raeburn}). Hence in this case, $T_\vp$ is a direct sum of truncated shifts.
\end{proof}

Clearly, Corollary \ref{coro-hypo} immediately follows from the above result as well. Also, note that the Halmos and Wallen models of power partial isometries played an important role in the proof of the above theorem. We refer \cite{Raeburn, Bracci, FR} for a more recent view point of power partial isometries.

Finally, summarizing our results from an operator theoretic point of view, we conclude the following: Let $T_\vp$, $\vp \in L^\infty(\T^n)$, be a partially isometric Toeplitz operator. Then the following hold:

\begin{enumerate}
\item If $n=1$, then $T_\vp$ is either an isometry, or a coisometry. This is due to Brown and Douglas. And, in view of Theorem \ref{thm: phi is shift}, $T_\vp$ is either a shift, or a co-shift.
\item If $n > 1$, then, up to unitary equivalence, $T_\vp$ is either a shift, or a co-shift, or a direct sum of truncated shifts.
\end{enumerate}

\vspace{0.2in}

\noindent\textbf{Acknowledgement:}
We are very thankful to Professor Greg Knese for his comments and suggestions. In particular, the proof of Proposition \ref{prop: norm T phi} is due to Professor Knese, and we are grateful to him for allowing us to include his argument here. The research of the second named author is supported by NBHM  (National Board of Higher Mathematics, India) post-doctoral fellowship no: 0204/27-/2019/R\&D-II/12966. The third author is supported in part by the Mathematical Research Impact Centric Support, MATRICS (MTR/2017/000522), and Core Research Grant (CRG/2019/000908), by SERB, Department of Science \& Technology (DST), and NBHM (NBHM/R.P.64/2014), Government of India.


\begin{thebibliography}{99}


\bibitem{Raeburn}
A. an Huef, I. Raeburn and I. Tolich, {\em Structure theorems for star-commuting power partial isometries}, Linear Algebra Appl. 481 (2015), 107–114.

\bibitem{Axler}
S. Axler, {\em Toeplitz operators}, A glimpse at Hilbert space operators, 125–133, Oper. Theory Adv. Appl., 207, Birkh\"{a}user Verlag, Basel, 2010.

\bibitem{Beurling}
A. Beurling, {\em On two problems concerning linear transformations in Hilbert space}, Acta Math., 81 (1949), 239-255.


\bibitem{Bracci}
L. Bracci and L. Picasso, {\em Representations of semigroups of partial isometries}, Bull. Lond. Math. Soc. 39 (2007), 792–802.

  	
\bibitem{BD}
A. Brown and R. Douglas, {\em Partially isometric Toeplitz operators}, Proc. Amer. Math. Soc. (1965), 681–682.

\bibitem{BH}
A. Brown and P. Halmos, {\em Algebraic properties of Toeplitz operators},  J. Reine Angew. Math. 213 (1963–64), 89–102.

\bibitem{D}
X. Ding, {\em Products of Toeplitz operators on the polydisk}, Integr. Equ. Oper. Theory, 45 (2003), 389–403.

\bibitem{DSZ}
X. Ding, S. Sun and D. Zheng, {\em Commuting Toeplitz operators on the bidisk}, J. Funct. Anal. 263 (2012), 3333–3357.

\bibitem{Douglas book}
R. Douglas, {\em Banach algebra techniques in operator theory}, Second edition. Graduate Texts in Mathematics, 179. Springer-Verlag, New York, 1998.

\bibitem{FR}
N. Fowler and I. Raeburn, {\em The Toeplitz algebra of a Hilbert bimodule}, Indiana Univ. Math. J. 48 (1999), 155–181.

\bibitem{Gu}
C. Gu, {\em Some algebraic properties of Toeplitz and Hankel operators on polydisk}, Arch. Math. (Basel), 80 (2003), 393–405.

\bibitem{HM}
P. Halmos and J. McLaughlin, {\em Partial isometries}, Pacific J. Math. 13 (1963), 585–596.

\bibitem{HW}
P. Halmos and L. Wallen, {\em Powers of partial isometries}, Indiana Univ. Math. J. 19 (1970), 657–663.

\bibitem{HR}
N. Higson and J. Roe, {\em Analytic $K$-homology}, Oxford Mathematical Monographs, Oxford University Press, Oxford, 2000.

\bibitem{Lu}
Y. Lu and Y. Yang, {\em Invariant subspaces of of Hardy space on the polydisk}, Advances Math (China), 41(2012), 313-319.

\bibitem{MMSS}
A. Maji, A. Mundayadan, J. Sarkar and T. R. Sankar, {\em Characterization of invariant subspaces in the polydisc}, J. Operator Theory, 82 (2019), 445–468.

\bibitem{MSS}
A. Maji, J. Sarkar and S. Sarkar, {\em Toeplitz and asymptotic Toeplitz operators on $H^2(\D^n)$}, Bull. Sci. Math. 146 (2018), 33–49.


\bibitem{Martinez book}
R. Mart\'{i}nez-Avenda\~{n}o and P. Rosenthal, {\em An introduction to operators on the Hardy-Hilbert space}, Graduate Texts in Mathematics, 237. Springer, New York, 2007.


\bibitem{Nikolski}
N.  Nikolski, {\em Operators, functions, and systems: an easy reading. Vol. 1. Hardy, Hankel, and Toeplitz}, Translated from the French by Andreas Hartmann. Mathematical Surveys and Monographs, 92. American Mathematical Society, Providence, RI, 2002.


\bibitem{WR}
W. Rudin, {\em Function Theory in Polydiscs}, W. A. Benjamin, Inc., New York–Amsterdam, 1969.

\bibitem{Shabat}
B. Shabat, {\em Introduction to complex analysis. Part II. Functions of several variables}, Translated from the third (1985) Russian edition by J. S. Joel. Translations of Mathematical Monographs, 110. American Mathematical Society, Providence, RI, 1992.

\bibitem{SZ}
S. Sun and D. Zheng, {\em Toeplitz operators on the polydisk}, Proc. Amer. Math. Soc. 124 (1996), 3351–3356.

\bibitem{MT}
M. Tsuji, {\em On the boundary value of a bounded analytic function of several complex variables}, Proc. Japan Acad. 21 (1945), 308–312.


\end{thebibliography}
\end{document}